\crefname{equation}{}{} % To comply with IEEE standard
\newlist{identities}{enumerate}{1}
\setlist[identities,1]{
    label={\textbf{ID \arabic*}},
    ref=\arabic*, % if to be used with \cref, don't provide label string or parentheses
    wide,itemsep=0pt,topsep=0pt}
\crefname{identitiesi}{ID}{IDs} % singular and plural forms of text labels
\Crefname{identitiesi}{ID}{IDs}
\pgfplotsset{compat=1.18}
\newtheorem{proposition}{Proposition}
\newtheorem{definition}{Definition}
\newtheorem{theorem}{Theorem}
\newtheorem{remark}{Remark}
\crefname{lemma}{Lemma}{Lemmas} % singular and plural forms of text labels
\Crefname{lemma}{Lemma}{Lemmas}
\newcommand*\circled[1]{\tikz[baseline=(char.base)]{\node(char)[shape=rounded rectangle,draw,inner sep=0.6pt,minimum height=1.5ex]{#1};}} % 
\newcommand\kronF[2]{{#1}^{\circled{\tiny{\ensuremath{#2}}}}} % Kronecker
\renewcommand\Vec[1][]{\textnormal{vec}\ensuremath{\if$#1$ \else \left[#1\right]\fi}}
\newcommand{\real}{\mathbb{R}}
\newcommand{\rd}{\text{\upshape d}} % Used for "d" in integration "dx".)
\newcommand{\bzero}{\ensuremath{\mathbf{0}}} % Bold zero often needed
\newcommand{\tv}{\ensuremath{\tilde{v}}}
\newcommand{\bA}{\ensuremath{\mathbf{A}}}
\newcommand{\bB}{\ensuremath{\mathbf{B}}}
\newcommand{\bD}{\ensuremath{\mathbf{D}}}
\newcommand{\bG}{\ensuremath{\mathbf{G}}}
\newcommand{\bI}{\ensuremath{\mathbf{I}}}
\newcommand{\bK}{\ensuremath{\mathbf{K}}}
\newcommand{\bQ}{\ensuremath{\mathbf{Q}}}
\newcommand{\bR}{\ensuremath{\mathbf{R}}}
\newcommand{\bS}{\ensuremath{\mathbf{S}}}
\newcommand{\bV}{\ensuremath{\mathbf{V}}}
\newcommand{\ba}{\ensuremath{\mathbf{a}}}
\newcommand{\be}{\ensuremath{\mathbf{e}}}
\renewcommand{\bf}{\ensuremath{\mathbf{f}}}
\newcommand{\bg}{\ensuremath{\mathbf{g}}}
\newcommand{\bq}{\ensuremath{\mathbf{q}}}
\newcommand{\bu}{\ensuremath{\mathbf{u}}}
\newcommand{\bv}{\ensuremath{\mathbf{v}}}
\newcommand{\bx}{\ensuremath{\mathbf{x}}}
\newcommand{\bz}{\ensuremath{\mathbf{z}}}
\newcommand{\cK}{\ensuremath{\mathcal{K}}}
\newcommand{\cL}{\ensuremath{\mathcal{L}}}
\newcommand{\perm}[2]{\ensuremath{\bS_{#1 \times #2}}}
\newcommand{\F}{\ensuremath{\mathbf{F}}}
\newcommand{\G}{\ensuremath{\mathbf{G}}}
\title{\LARGE \textbf{
    Computing Solutions to the Polynomial-Polynomial Regulator Problem*
}}
\author{Nicholas A. Corbin$^{1}$ and Boris Kramer$^{2}$% <-this % stops a space
\thanks{*This work was supported by the National Science Foundation under Grant CMMI-2130727.}% <-this % stops a space
\thanks{$^{1}$N. Corbin is with the Department of Mechanical and Aerospace Engineering, University of California San Diego, La Jolla, CA 92093-0411 USA {\tt\small ncorbin@ucsd.edu}}
\thanks{$^{2}$B. Kramer is with the Faculty of the Department of Mechanical and Aerospace Engineering, University of California San Diego, La Jolla, CA 92093-0411 USA {\tt\small bmkramer@ucsd.edu}}
}
\begin{document}
\maketitle

%%%%%%%%%%%%%%%%%%%%%%%%%%%%%%%%%%%%%%%%%%%%%%%%%%%%%%%%%%%%%%%%%%%%%%%%%%%%%%%%%%%%%%%%%%%%%%%%%
%%%%%%%%%%%%%%%%%%%%%%%%%%           Abstract               %%%%%%%%%%%%%%%%%%%%%%%%%%%%%%%%%%%%%%
%%%%%%%%%%%%%%%%%%%%%%%%%%%%%%%%%%%%%%%%%%%%%%%%%%%%%%%%%%%%%%%%%%%%%%%%%%%%%%%%%%%%%%%%%%%%%%%%%
\begin{abstract}
    We consider the optimal regulation problem for nonlinear control-affine dynamical systems.
    Whereas the linear-quadratic regulator (LQR) considers optimal control of a \emph{linear} system with \emph{quadratic} cost function, we study \emph{polynomial} systems with \emph{polynomial} cost functions; we call this problem the polynomial-polynomial regulator (PPR).
    The resulting polynomial feedback laws provide two potential improvements over linear feedback laws: 1) they more accurately approximate the optimal control law, resulting in lower control costs, and 2) for \emph{some} problems they can provide a larger region of stabilization.
    We derive explicit formulas---and a scalable, general purpose software implementation---for computing the polynomial approximation to the value function that solves the optimal control problem.
    The method is illustrated first on a low-dimensional aircraft stall stabilization example,
    for which PPR control recovers the aircraft from more severe stall conditions than LQR control.
    Then we demonstrate the scalability of the approach on a semidiscretization of dimension $n=129$ of a partial differential equation,
    for which the PPR control reduces the control cost by approximately 75\% compared to LQR for the initial condition of interest.
\end{abstract}

%%%%%%%%%%%%%%%%%%%%%%%%%%%%%%%%%%%%%%%%%%%%%%%%%%%%%%%%%%%%%%%%%%%%%%%%%%%%%%%%%%%%%%%%%%%%%%%%%
%%%%%%%%%%%%%%%%%%%%%%%%%%           Main Sections         %%%%%%%%%%%%%%%%%%%%%%%%%%%%%%%%%%%%%%
%%%%%%%%%%%%%%%%%%%%%%%%%%%%%%%%%%%%%%%%%%%%%%%%%%%%%%%%%%%%%%%%%%%%%%%%%%%%%%%%%%%%%%%%%%%%%%%%%
\section{Introduction}\label{sec:intro}
Optimal control for linear dynamical systems has been well studied for decades.
This is evidenced by the availability of simple software tools that allow practitioners to design optimal controllers with just a few lines of code, such as \textsc{Matlab}'s $\texttt{lqr()}$.
These software tools have also been scaled to work for large-scale dynamical systems \cite{Benner2008a}, such as those arising from semidiscretization of partial differential equations (PDEs).
While nonlinear optimal control theory remains an active area of research, its adoption to large-scale nonlinear systems remains a major computational challenge.
The development of scalable software tools thus remains another active research direction.

A computational challenge in nonlinear optimal control theory involves solving the Hamilton-Jacobi-Bellman (HJB) PDE for the \emph{value function}.
Many approaches exist for approximating the solutions to HJB PDEs, including state-dependent Riccati equations~\cite{Cimen2012}, algebraic Gramians \cite{Condon2005,Gray2006,Benner2024}, discretization techniques \cite{Falcone2016}, iterative approaches \cite{Kalise2018,Dolgov2021}, and many others.
In this paper, we expand on a recent body of work \cite{Breiten2019,Borggaard2020,Borggaard2021,Almubarak2019,Kramer2024} bringing renewed interest to the method of Al'brekht \cite{Albrekht1961}, which is based on Taylor series expansions.
This approach to solving HJB PDEs has been popular ever since it was introduced in the 1960s;
however, since Al'brekht's results were presented in an abstract manner without closed-form solutions, the method historically was only applied to models with a few state dimensions and simplified dynamics in order to make the computations manageable to carry out \cite{Garrard1967,Garrard1972}.
Scalable software tools automating these computations
are required for general purpose use of Al'brekht's method, e.g. for larger state dimensions and for models with more than one or two nonlinearities.

Krener's Nonlinear Systems Toolbox (NST), originally introduced in 1997, was, to the authors' knowledge, the first openly available implementation of Al'brekht's method for general purpose use \cite{Krener2019}.
However, certain symbolic computations used in NST hindered its scalability, as detailed in \cite{Borggaard2018}.
Borggaard and Zietsman went on to provide an implementation based on the Kronecker product in \cite{Borggaard2020,Borggaard2021}; a similar approach was simultaneously presented by Almubarak et al. \cite{Almubarak2019}.
These approaches were shown to scale well to state-dimensions on the order of $n=20$ to $n=40$; however, these works only consider systems with polynomial drift, linear inputs, and quadratic cost functions.
There are, however, applications requiring HJB PDE solutions for systems with polynomial input maps and even polynomial state-dependence in the cost function; see, for example, nonlinear balanced truncation model reduction \cite{Scherpen1993,Krener2008,Fujimoto2008a,Kramer2023}.

The major contributions of this paper are to derive closed-form formulas, and existence of solution proofs, for value function approximations for systems with:
\begin{enumerate}
    \item polynomial state-dependence in the drift \emph{and} input map;
    \item polynomial state-dependence in the state penalty in the cost function.
\end{enumerate}
In addition to these two theoretical contributions, we provide an accompanying open-source, scalable implementation of the proposed algorithms for practical use.
The function \texttt{ppr()} in the \texttt{cnick1/PPR} repository \cite{Corbin2024} acts as a nonlinear analog to \textsc{Matlab}'s \texttt{lqr()}.

\section{Preliminaries}\label{sec:background}
We review relevant
optimal control theory in \cref{sec:optimal-control},
followed by a summary of Al'brekht's method in \cref{sec:albrekht-method}.
Kronecker product definitions and identities are then reviewed in \cref{sec:kronecker}.
\subsection{Optimal Control Theory}\label{sec:optimal-control}
Consider the control-affine dynamical system
\begin{equation}\label{eq:FOM-NL}
    \dot{\bx}(t) = \bf(\bx(t))  + \bg(\bx(t)) \bu(t),
\end{equation}
where: $t$ is time,
$\bx(t) \in \real^n$ is the state,
$\bu(t) \in \real^m$ is the input,
$\bf \colon \real^n \to \real^n$ is the drift, and
$\bg \colon \real^n \to \real^{n \times m}$ is the input map.
The optimal control problem for \cref{eq:FOM-NL} seeks to find an input signal $\bu(t)$
that minimizes the infinite-horizon scalar cost function
\begin{equation} \label{eq:performanceIndex}
    J(\bx_0,\bu)  \coloneqq \frac{1}{2} \int_{0}^{\infty} \left(\bx^\top \bm{Q}(\bx) \bx +  \bu^\top \bm{R}(\bx) \bu \right)\rd t,
\end{equation}
where $\bm{Q}(\bx)\succeq \bzero$ and $\bm{R}(\bx)\succ \bzero$ are nonnegative definite and positive definite symmetric matrix-valued functions, respectively, of appropriate dimensions.
\begin{definition}
    The value the cost function takes under the action of the optimal control is given by \emph{the value function}:
    \begin{align} \label{eq:valueFunction}
        V(\bx_0) & \coloneqq \min_{\bu}  J(\bx_0,\bu).
    \end{align}
\end{definition}
The optimal control is denoted $\bu_*$, so $V(\bx_0)  \equiv J(\bx_0,\bu_*) $.
The next theorem summarizes the well-known result that the solution to the optimal control problem can be obtained by solving the HJB PDE.
\begin{theorem}[e.g. \cite{Kalman1963,Lewis2012}]
    Assume that the cost \cref{eq:performanceIndex} is continuously differentiable in all of its arguments and is strictly convex in $\bu$.
    Then the value function is the solution to the HJB PDE
    \begin{equation}
        \begin{split}\label{eq:HJB-PDE}
            0 & = \frac{\partial V^\top(\bx)}{\partial \bx}  \bf(\bx) - \frac{1}{2} \frac{\partial V^\top(\bx)}{\partial \bx}\bg(\bx) \bm{R}^{-1}(\bx)\bg^\top(\bx) \frac{\partial V(\bx)}{\partial \bx} \\
              & \qquad + \frac{1}{2} \bx^\top \bm{Q}(\bx) \bx.
        \end{split}
    \end{equation}
    Furthermore, the optimal control $\bu_*$ is given in feedback form by the gradient of the value function
    as
    \begin{equation}\label{eq:optimal-u}
        \bu_*(\bx) =  - \bm{R}^{-1}(\bx) \bg^\top(\bx) \frac{\partial V(\bx)}{\partial \bx}.
    \end{equation}
\end{theorem}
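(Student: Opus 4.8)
The plan is to derive the HJB PDE \cref{eq:HJB-PDE} from Bellman's principle of optimality by a first-order expansion in time, and then to confirm that the feedback \cref{eq:optimal-u} is genuinely optimal by a completion-of-squares / verification argument. First I would record the dynamic programming principle: for any $\delta>0$, splitting the integral in \cref{eq:performanceIndex} at $t=\delta$ and minimizing the tail over the future control (which, by time-invariance of \cref{eq:FOM-NL}, equals $V$ evaluated at the state $\bx(\delta)$) gives
\[
V(\bx_0) = \min_{\bu}\left\{\frac{1}{2}\int_0^{\delta}\left(\bx^\top\bm{Q}(\bx)\bx + \bu^\top\bm{R}(\bx)\bu\right)\rd t + V(\bx(\delta))\right\}.
\]

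Assuming $V\in C^1$, I would then Taylor-expand $V(\bx(\delta)) = V(\bx_0) + \frac{\partial V^\top(\bx_0)}{\partial\bx}\dot{\bx}(0)\,\delta + o(\delta)$, substitute $\dot{\bx}(0) = \bf(\bx_0)+\bg(\bx_0)\bu(0)$ from \cref{eq:FOM-NL}, cancel $V(\bx_0)$, divide by $\delta$, and let $\delta\to0$, obtaining the infinitesimal identity
\[
0 = \min_{\bu\in\real^m}\left\{\frac{1}{2}\bx^\top\bm{Q}(\bx)\bx + \frac{1}{2}\bu^\top\bm{R}(\bx)\bu + \frac{\partial V^\top(\bx)}{\partial\bx}\left(\bf(\bx)+\bg(\bx)\bu\right)\right\}.
\]
Because $\bm{R}(\bx)\succ\bzero$ (the hypothesis that the cost is strictly convex in $\bu$), the bracketed expression is a strictly convex quadratic in $\bu$, so its unique minimizer solves $\bm{R}(\bx)\bu+\bg^\top(\bx)\frac{\partial V(\bx)}{\partial\bx}=\bzero$, which is exactly \cref{eq:optimal-u}; plugging this minimizer back in and collecting the quadratic terms produces \cref{eq:HJB-PDE}.

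To show this critical point is the true optimum (and hence that the $C^1$ solution of \cref{eq:HJB-PDE} really is the value function), I would run a verification argument. For any admissible control $\bu$ whose trajectory $\bx(\cdot)$ decays to the origin, write $V(\bx_0)=-\int_0^\infty\frac{\rd}{\rd t}V(\bx(t))\,\rd t$, apply the chain rule, use \cref{eq:HJB-PDE} to eliminate $\frac{\partial V^\top(\bx)}{\partial\bx}\bf(\bx)$, and complete the square; the leftover integrand equals $\frac{1}{2}\big\|\bm{R}^{1/2}(\bx)\bu+\bm{R}^{-1/2}(\bx)\bg^\top(\bx)\frac{\partial V(\bx)}{\partial\bx}\big\|^2\ge0$ and vanishes precisely when $\bu$ is the feedback \cref{eq:optimal-u}. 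Hence $J(\bx_0,\bu)\ge V(\bx_0)$ with equality achieved by $\bu_*$, which simultaneously identifies $V$ with the value function, verifies the HJB PDE, and confirms \cref{eq:optimal-u}.

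The main obstacle is regularity: the argument above presumes $V\in C^1$, whereas the value function of a general nonlinear problem need only be a (possibly nonsmooth) viscosity solution of \cref{eq:HJB-PDE}, failing to be differentiable where optimal trajectories collide. A fully rigorous treatment either takes $C^1$ regularity together with suitable positivity and decay of $V$ as a standing hypothesis---consistent with the cited references and entirely sufficient for the Taylor-series construction used in the rest of this paper---or rephrases the expansion step in the viscosity-solution framework. A minor secondary point is justifying the exchange of $\lim_{\delta\to0}$ and $\min_{\bu}$ when passing to the infinitesimal identity, which requires a uniform bound on near-optimal controls over short horizons.
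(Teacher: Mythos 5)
The paper offers no proof of this theorem---it is stated as a classical result and deferred entirely to the citations \cite{Kalman1963,Lewis2012}---so there is no in-paper argument to compare against. Your proposal is the standard dynamic-programming-principle derivation followed by a completion-of-squares verification, which is precisely the argument found in those references, and the algebra checks out: the first-order condition $\bm{R}(\bx)\bu+\bg^\top(\bx)\frac{\partial V(\bx)}{\partial\bx}=\bzero$ yields \cref{eq:optimal-u}, and substituting it back produces the $-\tfrac{1}{2}\frac{\partial V^\top(\bx)}{\partial\bx}\bg(\bx)\bm{R}^{-1}(\bx)\bg^\top(\bx)\frac{\partial V(\bx)}{\partial\bx}$ term of \cref{eq:HJB-PDE}. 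The regularity caveat you flag ($V\in C^1$ versus viscosity solutions) is a genuine gap for general nonlinear problems, but in the context of this paper it is subsumed by the hypotheses of Al'brekht's method (\cref{thm:albrekht}), under which the value function is analytic in a neighborhood of the origin, so your smooth-case argument is the relevant one here.
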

Assuming a solution exists that satisfies the HJB PDE \cref{eq:HJB-PDE}, then the optimal control is given by \cref{eq:optimal-u}; hence, the optimal control problem reduces to solving the HJB PDE \cref{eq:HJB-PDE}.

\subsection{Al'brekht's Method}\label{sec:albrekht-method}
Computing solutions to the HJB PDE \cref{eq:HJB-PDE} is nontrivial and, in general, not possible analytically.
In the interest of developing scalable algorithms, we adopt the approach of Al'brekht \cite{Albrekht1961} to compute polynomial approximations to the value function.
Al'brekht's method has three main features, which we summarize in the next theorem.
\begin{theorem}[Al'brekht's method \cite{Albrekht1961,Lukes1969}]\label{thm:albrekht}
    Assume that the functions $\bf(\bx)$ and $\bg(\bx)$ in the dynamics, along with the functions $\bm{Q}(\bx)$ and $\bm{R}(\bx)$ in the cost function, are analytic.
    Also assume that a stabilizing solution exists to the LQR problem associated with the
    linearized dynamics\footnote{In practice this is the main limitation/assumption for the method: the algebraic Riccati equation for the LQR problem must have a solution.}.
    Then:
    \begin{enumerate}
        \item the value function
              is analytic and can be approximated by a degree~$d$ polynomial;
        \item the lowest degree polynomial term in the value function is degree~$2$, whose polynomial coefficient
              is given by the solution to the algebraic Riccati equation associated with the LQR problem on the linearized dynamics;
        \item the remaining higher degree polynomial coefficients of the value function are solutions to
              linear algebraic equations that are entirely determined by the already-computed polynomial coefficients.
    \end{enumerate}
\end{theorem}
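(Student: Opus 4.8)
The plan is to substitute a formal power series ansatz for $V$ into the HJB PDE \cref{eq:HJB-PDE} and match homogeneous terms degree by degree. Expand the data about the origin as $\bf(\bx) = A\bx + \bf_{\ge 2}(\bx)$, $\bg(\bx) = B + \bg_{\ge 1}(\bx)$, $\bm{Q}(\bx) = Q + \bm{Q}_{\ge 1}(\bx)$, $\bm{R}(\bx) = R + \bm{R}_{\ge 1}(\bx)$, where $A = \tfrac{\partial \bf}{\partial\bx}(\bzero)$, $B = \bg(\bzero)$, $Q = \bm{Q}(\bzero)\succeq\bzero$, $R = \bm{R}(\bzero)\succ\bzero$, and (implicitly) $\bf(\bzero) = \bzero$ so the origin is an equilibrium. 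To establish claim (1), I would invoke Lukes' theorem \cite{Lukes1969}: under analyticity of the data and existence of a stabilizing LQR solution—which makes the closed-loop linearization asymptotically stable—the optimal value function is analytic near the origin, so its degree-$d$ Taylor polynomial is a well-defined approximation; the first task is thus merely to check that the stated hypotheses are precisely what Lukes requires. Since $V\ge 0$ with $V(\bzero) = 0$ (the zero trajectory has zero cost), the origin is a global minimizer, so the Taylor series carries no constant or linear term; write $V(\bx) = \sum_{k\ge 2} v_k(\bx)$ with $v_k$ homogeneous of degree $k$, which is the lowest-degree assertion in claim (2).

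For the quadratic coefficient in claim (2), I would collect the homogeneous degree-$2$ terms of \cref{eq:HJB-PDE}. Because $\partial v_2/\partial\bx$ is linear in $\bx$, only $A\bx$ from the drift, $B$ from the input map, $Q$ from the state penalty, and $R$ from the control penalty survive at this order; writing $v_2(\bx) = \tfrac12\bx^\top P\bx$ with $P = P^\top$, the degree-$2$ balance reduces exactly to the algebraic Riccati equation $A^\top P + PA - PBR^{-1}B^\top P + Q = \bzero$. The hypothesis supplies its symmetric stabilizing solution $P$, equivalently the closed-loop matrix $A_c \coloneqq A - BR^{-1}B^\top P$ is Hurwitz; this single assumption is what drives the recursion.

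For claim (3), I would fix $k\ge 3$ and collect the homogeneous degree-$k$ terms. The drift term $(\partial V/\partial\bx)^\top\bf(\bx)$ contributes $(\partial v_k/\partial\bx)^\top A\bx$ together with contributions built only from $v_2,\dots,v_{k-1}$ and the higher Taylor coefficients of $\bf$; the quadratic crossterm $-\tfrac12(\partial V/\partial\bx)^\top\bg\bm{R}^{-1}\bg^\top(\partial V/\partial\bx)$ contributes $-(\partial v_k/\partial\bx)^\top BR^{-1}B^\top P\bx$ plus terms involving only $v_2,\dots,v_{k-1}$ and the Taylor data of $\bg$ and $\bm{R}$; and $\tfrac12\bx^\top\bm{Q}(\bx)\bx$ contributes only known data at order $k$. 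Collecting the part linear in $v_k$, the degree-$k$ identity is $(\partial v_k/\partial\bx)^\top A_c\bx = -w_k(\bx)$ with $w_k$ homogeneous of degree $k$ and depending only on already-determined quantities. The left side is the Lie derivative of $v_k$ along $\bx\mapsto A_c\bx$, a linear operator on degree-$k$ forms whose eigenvalues are the $k$-fold sums $\lambda_{i_1}+\cdots+\lambda_{i_k}$ of eigenvalues of $A_c$; after symmetrizing the coefficient tensor and vectorizing, this operator is the $k$-fold Kronecker sum of $A_c^\top$ with itself. Each such sum has strictly negative real part because $A_c$ is Hurwitz, so the operator is nonsingular and $v_k$ is uniquely determined. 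Induction on $k$ yields all coefficients through degree $d$, and substituting the resulting polynomial into \cref{eq:optimal-u} gives the polynomial feedback law.

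The main obstacle is the bookkeeping in claim (3): one must verify rigorously that, once the quadratic crossterm and the polynomial cost are fully expanded, the \emph{only} degree-$k$ contribution involving $v_k$ is the $A_c$-term, with every remaining degree-$k$ contribution expressible through $v_2,\dots,v_{k-1}$ and the known Taylor data. This is exactly where polynomial (rather than merely linear) state dependence in $\bg$ and $\bm{R}$, and polynomial dependence in $\bm{Q}$, complicate the accounting relative to the classical linear-input/quadratic-cost case, and it is the step the paper's closed-form Kronecker-product formulas are built to handle. The remaining points—symmetrizing the coefficient tensors so the vectorized system is square, and identifying the homogeneous operator as a Kronecker sum of $A_c^\top$ to read off its Hurwitz spectrum—are then straightforward.
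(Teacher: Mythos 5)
Your proposal is correct and follows essentially the same route as the paper: the paper cites Al'brekht and Lukes for this theorem rather than proving it, delegating the analyticity claim exactly as you do, while its own computations (the proof of \cref{thm:wiPoly} in the appendix and the existence/uniqueness argument) carry out precisely your degree-by-degree matching --- the Riccati equation at degree two and, for $k \ge 3$, a linear system governed by the $k$-way Lyapunov matrix $\cL_k(\bA - \bB\bR^{-1}\bB^\top\bV_2)$, which is your Kronecker sum of the Hurwitz closed-loop matrix and hence nonsingular. The only caveat is that the substantive content of claim (1), namely convergence of the formal series to the true value function, rests entirely on the citation to Lukes in both your sketch and the paper.
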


The remainder of this paper will focus on computing approximate solutions to the HJB PDE \cref{eq:HJB-PDE} using Al'brekht's method; the next section introduces Kronecker product notation to aid in that task.

\subsection{Kronecker Product Definitions and Notation}\label{sec:kronecker}
The Kronecker product of two matrices $\bA \in \real^{p \times q}$ and $\bB \in \real^{s \times t}$ is the $ps \times qt$ block matrix
\begin{align*}
    \bA \otimes \bB \coloneqq \begin{bmatrix} a_{11}\bB & \cdots & a_{1q}\bB \\
                \vdots    & \ddots & \vdots    \\
                a_{p1}\bB & \cdots & a_{pq}\bB
                              \end{bmatrix},
\end{align*}
where $a_{ij}$ denotes the $(i,j)$th entry of $\bA$.
We write repeated Kronecker products as
\begin{equation*}
    \kronF{\bx}{k} \coloneqq \underbrace{\bx \otimes \dots \otimes \bx}_{k \ \text{times}}
    \in \real^{n^k}.
\end{equation*}

\noindent For $\bA \in \real^{p \times q}$, we define the
\textit{$k$-way Lyapunov matrix} as
\small
\begin{equation*}%\label{eq:kWayLyapunov}
    \cL_k(\bA) \coloneqq \sum_{i=1}^k\underbrace{\bI_p \otimes \bA \otimes \bI_p \otimes \dots \otimes \bI_p}_{\text{$k$ factors, $\bA$ in the $i$th position}} \in \real^{p^k \times p^{k-1}q}.
\end{equation*}
\normalsize
We also use the $\Vec{[\cdot]}$ operator, which stacks the columns of a matrix into one tall column vector, and the \textit{perfect shuffle matrix} $\perm{q}{p}$ \cite{Henderson1981,VanLoan2000}, defined as the permutation matrix which shuffles $\Vec[\bA]$ to match $\Vec[\bA^\top]$:
\begin{equation}\label{eq:vecPermutation}
    \Vec[\bA^\top]=\perm{q}{p} \Vec[\bA].
\end{equation}

A concept which arises when dealing with Kronecker product polynomials is symmetry of the coefficients (a generalization of symmetry of a matrix), as defined next.
\begin{definition}[Symmetric coefficients\label{def:sym}] Given a homogeneous polynomial of the form $\bv_d^\top \kronF{\bx}{d}$, the coefficient $\bv_k \in \real^{n^k \times 1}$ is \emph{symmetric} if for all $\ba_i \in \real^n$ it satisfies
    \begin{align*}
        \bv_k^\top \left(\ba_1 \otimes \ba_2 \otimes \cdots \otimes \ba_k\right) = \bv_k^\top \left(\ba_{i_1} \otimes \ba_{i_2} \otimes \cdots \otimes \ba_{i_k}\right),
    \end{align*}
    where the indices $\{ i_j \}_{j=1}^k$ are any permutation of $\{1, \dots, k \}$.
\end{definition}

From \cref{eq:vecPermutation}, one can see that if the matrix $\bA$ is symmetric, the vector $\Vec[\bA]$ is invariant under certain permutations.
The next \lcnamecref{thm:symPermutation} formalizes this concept in terms of the perfect shuffle matrix and the definition of symmetry in \cref{def:sym}.
\begin{proposition}[Permutation of symmetric coefficients]\label{thm:symPermutation}
    If a coefficient $\bv_k \in \real^{n^k \times 1}$ is symmetric as per \cref{def:sym}, then
    \begin{align*}
        \bv_k & = \perm{n^j}{n^i} \bv_k \qquad \forall i,j\geq0 \quad \text{s.t.}\quad  i+j=k.
    \end{align*}
\end{proposition}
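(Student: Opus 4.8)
The plan is a standard ``agree on a spanning set'' argument: I will show that $\perm{n^j}{n^i}\bv_k$ and $\bv_k$ have the same inner product with every rank-one tensor $\ba_1 \otimes \cdots \otimes \ba_k$ with $\ba_\ell \in \real^n$, and since the tensors $\be_{\ell_1}\otimes\cdots\otimes\be_{\ell_k}$ built from standard basis vectors are of this form, such rank-one tensors span $\real^{n^k}$, which forces the two vectors to be equal.

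First I would record two elementary facts about the perfect shuffle. Applying \cref{eq:vecPermutation} to the rank-one matrix $\bA = \bu\bw^\top$ gives $\Vec[\bA] = \bw\otimes\bu$ and $\Vec[\bA^\top] = \bu\otimes\bw$, hence the block-swap identity $\perm{q}{p}(\bw\otimes\bu) = \bu\otimes\bw$ for all $\bw\in\real^q$ and $\bu\in\real^p$; iterating this identity shows $\perm{p}{q}\perm{q}{p} = \bI$, so $\perm{q}{p}$ is an orthogonal permutation matrix with $\bigl(\perm{q}{p}\bigr)^\top = \bigl(\perm{q}{p}\bigr)^{-1} = \perm{p}{q}$. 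Since $\perm{n^j}{n^i}$ then has size $n^{i+j}\times n^{i+j} = n^k \times n^k$, matching the dimension of $\bv_k$, the claimed identity typechecks; and the boundary cases $i=0$ or $j=0$ are immediate because $\perm{1}{n^k} = \perm{n^k}{1} = \bI_{n^k}$, so I may assume $1 \le i \le k-1$ and $j = k-i$.

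Next, fix $\ba_1,\dots,\ba_k \in \real^n$ and set $\ba \coloneqq \ba_1\otimes\cdots\otimes\ba_i \in \real^{n^i}$ and $\bb \coloneqq \ba_{i+1}\otimes\cdots\otimes\ba_k \in \real^{n^j}$, so that $\ba_1\otimes\cdots\otimes\ba_k = \ba\otimes\bb$. Using $\bigl(\perm{n^j}{n^i}\bigr)^\top = \perm{n^i}{n^j}$ together with the block-swap identity,
\begin{align*}
    \bigl(\perm{n^j}{n^i}\bv_k\bigr)^\top(\ba\otimes\bb)
    &= \bv_k^\top\,\perm{n^i}{n^j}(\ba\otimes\bb)
     = \bv_k^\top(\bb\otimes\ba) \\
    &= \bv_k^\top\bigl(\ba_{i+1}\otimes\cdots\otimes\ba_k\otimes\ba_1\otimes\cdots\otimes\ba_i\bigr).
\end{align*}
Since $(\ba_{i+1},\dots,\ba_k,\ba_1,\dots,\ba_i)$ is a permutation of $(\ba_1,\dots,\ba_k)$, the symmetry of $\bv_k$ from \cref{def:sym} lets me replace the last expression by $\bv_k^\top(\ba_1\otimes\cdots\otimes\ba_k) = \bv_k^\top(\ba\otimes\bb)$. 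Hence $\bigl(\perm{n^j}{n^i}\bv_k - \bv_k\bigr)^\top(\ba\otimes\bb) = 0$ for every choice of the $\ba_\ell$, and the spanning remark completes the argument.

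I do not anticipate a genuine obstacle; the only delicate point is orientation bookkeeping --- confirming that it is $\perm{n^j}{n^i}$, rather than $\perm{n^i}{n^j}$, whose transpose implements the factor swap $\ba\otimes\bb\mapsto\bb\otimes\ba$ on $\real^{n^i}\otimes\real^{n^j}$ --- and then noting that the induced rearrangement of $(\ba_1,\dots,\ba_k)$ is a genuine permutation, so that \cref{def:sym} applies verbatim.
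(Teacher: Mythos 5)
Your argument is correct and complete: the block-swap identity $\perm{q}{p}(\bw\otimes\bu)=\bu\otimes\bw$ follows as you say from \cref{eq:vecPermutation} applied to $\bu\bw^\top$, the transpose bookkeeping $\bigl(\perm{n^j}{n^i}\bigr)^\top=\perm{n^i}{n^j}$ checks out, the cyclic rearrangement $(\ba_{i+1},\dots,\ba_k,\ba_1,\dots,\ba_i)$ is indeed a permutation so \cref{def:sym} applies, and testing against the spanning set of elementary tensors closes the gap. The paper states \cref{thm:symPermutation} without proof, so there is nothing to compare against; your proof is the standard one that the authors presumably had in mind, and the edge cases $i=0$ or $j=0$ are handled correctly via $\perm{1}{n^k}=\perm{n^k}{1}=\bI_{n^k}$.
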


\cref{tab:identities} in \cref{sec:identities} provides a collection of additional Kronecker product identities compiled from \cite{Brewer1978,Henderson1981,VanLoan2000,Magnus2019}.

\section{Kronecker Polynomial-Based Value Function Computations} \label{sec:NLBT-Poly}
For computational purposes, we
consider a nonlinear control-affine dynamical system with polynomial structure
\begin{equation}\label{eq:FOM-Poly}
    \begin{split}
        \dot{\bx} & = \underbrace{\bA \bx + \sum_{p=2}^\ell \F_p \kronF{\bx}{p}}_{\bf(\bx)} + \underbrace{\left(\sum_{p=1}^\ell \G_p \left(\kronF{\bx}{p} \otimes \bI_m\right) + \bB\right)}_{\bg(\bx)} \bu, \\
    \end{split}
\end{equation}
where $\bA \in \real^{n\times n}$, $\F_p \in \real^{n\times n^p}$, $\bB \in \real^{n\times m}$, and $\G_p \in \real^{n \times mn^p}$.
Often, the model of interest \cref{eq:FOM-NL} is already polynomial, in which case system \cref{eq:FOM-Poly} is an exact representation.
For example, many common PDEs, including Navier-Stokes, Kuramoto-Sivashinsky, Burgers, Allen-Cahn, Korteweg-de Vries, and Fokker-Planck all feature polynomial nonlinearities;
upon spatial discretization, these all yield systems of the form \cref{eq:FOM-Poly}.
If system \cref{eq:FOM-NL} is not exactly polynomial but is analytic, then system \cref{eq:FOM-Poly} is its Taylor approximation.
We also assume the cost \cref{eq:performanceIndex} to be analytic so that the functions $\bm{Q}(\bx)$ and $\bm{R}(\bx)$ can be expanded as real convergent power series.
In this article, we will only consider state dependence in $\bm{Q}(\bx)$ and drop the state-dependence of $\bm{R}(\bx)$; in theory it is possible, but not trivial, to include it.
Then we write the cost in terms of the Kronecker product as
\begin{align}
    J(\bx_0,\bu)  \coloneqq \frac{1}{2} \int_{0}^{\infty} \left(\bx^\top \bQ \bx  +  \bu^\top \bR \bu + \sum_{p=3}^{\lambda} \bq_p^\top \kronF{\bx}{p} \right)\rd t, \label{eq:performanceIndex-Poly}
\end{align}
where $\bq_p \in \real^{n^p}$
and we assume $\bQ \succeq \bzero$ and $\bR \succ \bzero$.
In many cases, the cost function chosen by the control engineer is already polynomial and this representation is exact; otherwise, it can be viewed as a Taylor approximation.

Following the result of \cref{thm:albrekht}, the value function can be approximated as a degree~$d$ polynomial; in this work, we write this explicitly using the Kronecker product as
\begin{align}
    V(\bx) & \approx \frac{1}{2} \bx^\top \bV_2 \bx + \frac{1}{2} \sum_{i=3}^d \bv_i^\top \kronF{\bx}{i}, \label{eq:vi-coeffs}
\end{align}
with the coefficients $\bv_i \in \real^{n^i}$ for $i=2, 3, \dots, d$.
The next theorem provides our main result, which is the explicit equations for the coefficients $\bv_i$ in Kronecker product form.

\begin{theorem}[Value function coefficients]\label{thm:wiPoly}
    Let the value function $V(\bx)$, which solves the HJB PDE \cref{eq:HJB-PDE} for the polynomial system \cref{eq:FOM-Poly}, be of the form~\cref{eq:vi-coeffs} with the coefficients $\bv_i \in \real^{n^i}$ for $i=2,3,\dots,d$.
    Then $\bv_2 = \Vec\left[\bV_2\right]$, where $\bV_2$ is the symmetric positive semidefinite solution to the algebraic Riccati equation
    % \cref{eq:ARE}.
    \begin{align}\label{eq:ARE}
        \bzero  = & \bA^\top \bV_2 + \bV_2 \bA - \bV_2 \bB \bR^{-1} \bB^\top \bV_2 + \bQ.
    \end{align}
    For $3 \leq k \leq d$, let $\mathbf{\tv}_k \in \real^{n^k}$ solve the linear system
    \small
    \begin{equation}\label{eq:LinSysForWk-Poly}
        \begin{split}
             &
            \cL_{k} \left(\bA - \bB \bR^{-1} \bB^\top  \bV_2 \right)^\top
            \mathbf{\tv}_k =
            - \sum_{\mathclap{\substack{i,p\geq 2                                                                                                     \\ i + p = k+1}}} \cL_i(\F_p)^\top \bv_i  \\
             & \hspace{.4cm} -   \bq_k + \frac{1}{4}\!\!\!\!\sum_{\substack{i,j>2                                                                     \\ i+j=k+2}} \!\!\!\!ij~{\Vec}(\bV_i^\top \bB \bR^{-1} \bB^\top \bV_j)
            \\
             & \hspace{.4cm} + \frac{1}{4} \sum_{o=1}^{2\ell}\left( \sum_{\substack{p,q \geq 0                                                        \\p+q=o}}   \left(  \sum_{\substack{i,j\geq 2 \\       i+j=k-o+2}} \!\!\!\!ij~\Vec \Biggl[ \left(\bI_{n^p} \otimes \Vec\left[ \bI_m\right]^\top\right) \right. \right.\\
             & \hspace{.4cm}\times \left(\Vec\left[\G_q^\top \bV_j \right]^\top \otimes \left(\G_p^\top \bV_i \otimes \bR^{-1} \right) \right) \times \\
             & \hspace{0.25cm} \left.\left.\left(\bI_{n^{j-1}} \otimes \perm{n^{i-1}}{n^q m} \otimes \bI_m \right)
                    \left( \bI_{n^{k-p}} \otimes \Vec\left[ \bI_m\right] \right)\Biggr]
            \vphantom{\sum_{\substack{p,q \geq 0                                                                                                      \\p+q=o}}} \right)\right)
        \end{split}
    \end{equation}
    \normalsize
    Then the coefficient vector $\bv_k = \Vec\left(\bV_k\right) \in \real^{n^k}$ for $3 \leq k \leq d$ is obtained by symmetrization of $\mathbf{\tv}_k$.
\end{theorem}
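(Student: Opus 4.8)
\emph{Proof strategy.}
The plan is to substitute the polynomial ansatz \cref{eq:vi-coeffs} into the HJB PDE \cref{eq:HJB-PDE} with the polynomial data \cref{eq:FOM-Poly}--\cref{eq:performanceIndex-Poly}, expand every term as a Kronecker polynomial in $\bx$, and match homogeneous degrees. Two elementary facts carry the computation. First, the monomial derivative is $\tfrac{\partial}{\partial\bx}\kronF{\bx}{k}=\sum_{j=1}^{k}\kronF{\bx}{j-1}\otimes\bI_n\otimes\kronF{\bx}{k-j}$, so for a symmetric coefficient $\bv_k$ (cf.\ \cref{def:sym,thm:symPermutation}) the gradient of $\tfrac12\bv_k^\top\kronF{\bx}{k}$ collapses, up to a perfect-shuffle reindexing, to $\tfrac{k}{2}\bV_k\kronF{\bx}{k-1}$, where $\bV_k\in\real^{n\times n^{k-1}}$ is the matricization of $\bv_k$. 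Second, a homogeneous polynomial $\bc^\top\kronF{\bx}{k}$ vanishes identically if and only if its symmetrization $\cS(\bc)$ vanishes; this is what turns term-by-term matching into the linear systems \cref{eq:ARE} and \cref{eq:LinSysForWk-Poly}. The whole argument is an induction on the degree $k$: assuming $\bv_2,\dots,\bv_{k-1}$ have already been constructed as symmetric coefficients solving their respective equations, I derive the equation for $\bv_k$.

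Collecting the degree-$2$ terms of \cref{eq:HJB-PDE} gives $\bx^\top\bV_2\bA\bx-\tfrac12\bx^\top\bV_2\bB\bR^{-1}\bB^\top\bV_2\bx+\tfrac12\bx^\top\bQ\bx=0$; symmetrizing the quadratic form yields \cref{eq:ARE}, and I take its stabilizing symmetric positive semidefinite solution, which exists by the LQR assumption of \cref{thm:albrekht}, so that $\bA_c\coloneqq\bA-\bB\bR^{-1}\bB^\top\bV_2$ is Hurwitz. For $k\ge3$, exactly two places in \cref{eq:HJB-PDE} produce terms containing $\bv_k$: the drift pairing of $\tfrac{\partial}{\partial\bx}(\tfrac12\bv_k^\top\kronF{\bx}{k})$ with $\bA\bx$ inside $\tfrac{\partial V^\top}{\partial\bx}\bf(\bx)$, and the cross term of $\bV_2\bx$ with $\tfrac{\partial}{\partial\bx}(\tfrac12\bv_k^\top\kronF{\bx}{k})$ inside $-\tfrac12\tfrac{\partial V^\top}{\partial\bx}\bg(\bx)\bR^{-1}\bg^\top(\bx)\tfrac{\partial V}{\partial\bx}$ --- only the constant part $\bB$ of $\bg(\bx)$ contributes at this degree. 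Using the gradient identity, symmetry of $\bv_k$, and the definition of $\cL_k$, the first contributes $\tfrac12\bv_k^\top\cL_k(\bA)\kronF{\bx}{k}$ and the second $-\tfrac12\bv_k^\top\cL_k(\bB\bR^{-1}\bB^\top\bV_2)\kronF{\bx}{k}$, so the $\bv_k$-dependent part of the degree-$k$ residual is $\tfrac12\bv_k^\top\cL_k(\bA_c)\kronF{\bx}{k}=\tfrac12(\cL_k(\bA_c)^\top\bv_k)^\top\kronF{\bx}{k}$, which is the left-hand side of \cref{eq:LinSysForWk-Poly}.

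All remaining degree-$k$ terms involve only $\bv_2,\dots,\bv_{k-1}$ and assemble the right-hand side of \cref{eq:LinSysForWk-Poly}. The drift nonlinearities $\F_p\kronF{\bx}{p}$ paired with $\tfrac{\partial}{\partial\bx}(\tfrac12\bv_i^\top\kronF{\bx}{i})$ give, after the same symmetry reduction, the terms $\cL_i(\F_p)^\top\bv_i$ with $i+p=k+1$; the state penalty contributes $\bq_k$; the $\bB\bR^{-1}\bB^\top$ quadratic form restricted to the $\bv_i,\bv_j$ gradients with $i,j>2$ and $i+j=k+2$ contributes $\tfrac14 ij\,\Vec(\bV_i^\top\bB\bR^{-1}\bB^\top\bV_j)$ via the scalar identity $(\kronF{\bx}{i-1})^\top\bM\,\kronF{\bx}{j-1}=\Vec[\bM]^\top\kronF{\bx}{k}$; and the genuinely new ingredient, the polynomial input map $\sum_p\G_p(\kronF{\bx}{p}\otimes\bI_m)$, enters through the remaining pieces of $\bg^\top(\bx)\,\tfrac{\partial V}{\partial\bx}$ and of the quadratic form. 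Expanding $\bg(\bx)\bR^{-1}\bg^\top(\bx)$ produces $\G_p$--$\G_q$ and $\G_p$--$\bB$ blocks of total monomial degree $o=p+q$ that combine with $\bv_i,\bv_j$ gradients satisfying $i+j=k-o+2$; the many $\bI_m$, $\bR^{-1}$ and $\kronF{\bx}{p}$ factors then have to be shuffled into one ordering using the perfect-shuffle permutation matrices of \cref{eq:vecPermutation} and collapsed onto a single $\kronF{\bx}{k}$ with the identities of \cref{tab:identities}, which is exactly the source of the nested sums and the $\bS$-factors in \cref{eq:LinSysForWk-Poly}. I expect this rearrangement --- tracking which Kronecker slots carry state, input, and $\bR^{-1}$ indices --- to be the main bookkeeping obstacle; everything else is routine degree matching.

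Finally, existence: since $\bA_c$ is Hurwitz, the eigenvalues of $\cL_k(\bA_c)$ are the $k$-fold sums $\mu_{i_1}+\dots+\mu_{i_k}$ of eigenvalues $\mu_j$ of $\bA_c$, each with strictly negative real part and hence nonzero, so $\cL_k(\bA_c)^\top$ is nonsingular and \cref{eq:LinSysForWk-Poly} has a unique solution $\tv_k$. It then remains to justify replacing $\tv_k$ by its symmetrization $\bv_k=\cS(\tv_k)$: because $\cL_k(\bA_c)$ is invariant under simultaneous permutation of its $k$ tensor factors, it commutes with $\cS$, so $\cL_k(\bA_c)^\top\bv_k=\cS(\cL_k(\bA_c)^\top\tv_k)$ equals $\cS$ of the right-hand side; since the degree-$k$ residual of \cref{eq:HJB-PDE} sees $\bv_k$ only through the symmetric contraction $\bv_k^\top\kronF{\bx}{k}$ and sees the right-hand side only through its symmetrization, the symmetric $\bv_k$ makes that residual vanish. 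Induction on $k$ from $2$ to $d$ then yields the form \cref{eq:vi-coeffs} with coefficients determined by \cref{eq:ARE} and \cref{eq:LinSysForWk-Poly}.
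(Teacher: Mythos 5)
Your proposal follows essentially the same route as the paper's proof: substitute the polynomial forms into the HJB PDE \cref{eq:HJB-PDE}, use symmetry of the already-computed coefficients to collapse the value-function gradient, isolate the degree-$k$ terms containing $\bv_k$ into a $k$-way Lyapunov system $\cL_k(\bA-\bB\bR^{-1}\bB^\top\bV_2)^\top\tv_k = \textit{RHS}$, and assemble the right-hand side from the $\F_p$, $\bq_k$, $\bB$, and $\G_p$ contributions. The one step you leave unexecuted---the Kronecker/perfect-shuffle bookkeeping that turns the $\G_p$--$\G_q$ cross terms into the explicit $\Vec[\cdot]$ expression in \cref{eq:LinSysForWk-Poly}---is also the only step the paper itself compresses to ``careful application of the identities in \cref{tab:identities}''; your added justifications that $\cL_k(\bA-\bB\bR^{-1}\bB^\top\bV_2)$ commutes with the symmetrizer and has eigenvalues given by $k$-fold sums of Hurwitz eigenvalues are correct and slightly more explicit than the paper's treatment.
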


The proof of \cref{thm:wiPoly} can found in \cref{sec:thm-proof}; it consists of inserting the polynomial forms for the $\bf(\bx)$, $\bg(\bx)$, $\bm{Q}(\bx)$, and $V(\bx)$ into the HJB PDE \cref{eq:HJB-PDE}.
Collecting terms of the same polynomial degree leads to algebraic equations for each of the unknown value function coefficients $\bv_i \in \real^{n^i}$ for $i=2,3,\dots,d$.

\begin{theorem}[Existence and uniqueness of solutions]
    Under the assumptions of \cref{thm:albrekht}, the linear system \cref{eq:LinSysForWk-Poly} has a unique solution for each coefficient $\bv_k$ for $3 \leq k \leq d$.
\end{theorem}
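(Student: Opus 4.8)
\medskip
\noindent\emph{Proof strategy.}
The plan is to recognize that the coefficient matrix on the left-hand side of the linear system~\eqref{eq:LinSysForWk-Poly} is exactly $\cL_k(\bA_{cl})^\top$, where $\bA_{cl} \coloneqq \bA - \bB\bR^{-1}\bB^\top\bV_2$ is the LQR closed-loop matrix, and to prove that this matrix is nonsingular for every $k \ge 1$. Since item~3 of \cref{thm:albrekht} already guarantees that the right-hand side of~\eqref{eq:LinSysForWk-Poly} is assembled entirely from the previously computed coefficients $\bv_2,\dots,\bv_{k-1}$ together with the fixed problem data, nonsingularity of $\cL_k(\bA_{cl})^\top$ immediately produces a unique $\tilde{\bv}_k$, and hence---after the fixed linear symmetrization step---a unique $\bv_k$. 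Running this from $k=3$ up to $k=d$, with base datum $\bv_2 = \Vec[\bV_2]$ supplied by the stabilizing solution of the ARE~\eqref{eq:ARE}, yields the claim by induction on $k$.

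Carrying this out, I would proceed as follows. First, invoke standard LQR theory: under the hypotheses of \cref{thm:albrekht} the ARE~\eqref{eq:ARE} has a symmetric positive semidefinite \emph{stabilizing} solution $\bV_2$, which by definition means $\bA_{cl} = \bA - \bB\bR^{-1}\bB^\top\bV_2$ is Hurwitz; write $\lambda_1,\dots,\lambda_n$ for its eigenvalues, so $\mathrm{Re}(\lambda_i) < 0$ for all $i$. Second, determine the spectrum of the $k$-way Lyapunov matrix: directly from the definition one has the recursion $\cL_k(\bA_{cl}) = \cL_{k-1}(\bA_{cl})\otimes\bI_n + \bI_{n^{k-1}}\otimes\bA_{cl}$, which exhibits $\cL_k(\bA_{cl})$ as a two-term Kronecker sum; combining this with the classical fact (see, e.g., \cite{Magnus2019}) that the eigenvalues of $\bM\otimes\bI + \bI\otimes\bN$ are the pairwise sums of eigenvalues of $\bM$ and of $\bN$, an induction on $k$ with base case $\cL_1(\bA_{cl}) = \bA_{cl}$ shows that the eigenvalues of $\cL_k(\bA_{cl})$ are exactly the $k$-fold sums $\lambda_{i_1} + \cdots + \lambda_{i_k}$ over all $(i_1,\dots,i_k) \in \{1,\dots,n\}^k$. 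Third, since every such sum has strictly negative real part, none of them is zero, so $\cL_k(\bA_{cl})$---and therefore its transpose appearing in~\eqref{eq:LinSysForWk-Poly}---is invertible, giving the unique solution $\tilde{\bv}_k = \bigl(\cL_k(\bA_{cl})^\top\bigr)^{-1}\br_k$ with $\br_k$ the well-defined right-hand side. Symmetrizing $\tilde{\bv}_k$ produces $\bv_k$, which remains uniquely determined because symmetrization is a fixed linear map applied to an already-unique vector.

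The step I expect to be the main obstacle is the spectral characterization of $\cL_k(\bA_{cl})$: one must set up the Kronecker-sum recursion carefully, tracking dimensions so that $\cL_k(\bA_{cl}) \in \real^{n^k\times n^k}$ is genuinely square (this uses that $\bA_{cl}$ is square), and either cite or re-derive the eigenvalue formula for two-term Kronecker sums, for instance via simultaneous Schur triangularization of the two summands. Everything else---extracting the Hurwitz property from the stabilizing ARE solution, the outer induction over $k$ that keeps the right-hand side well-defined, and the remark that symmetrization preserves uniqueness---is routine once the nonsingularity of $\cL_k(\bA_{cl})$ is established.
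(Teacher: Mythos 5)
Your proposal is correct and follows essentially the same route as the paper: the assumptions of \cref{thm:albrekht} make $\bA - \bB\bR^{-1}\bB^\top\bV_2$ Hurwitz, hence $\cL_k(\bA - \bB\bR^{-1}\bB^\top\bV_2)$ is nonsingular and each linear system has a unique solution. The only difference is that you spell out the Kronecker-sum spectral argument (eigenvalues of $\cL_k$ are $k$-fold sums of eigenvalues of the closed-loop matrix) that the paper asserts in one line without justification.
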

\begin{proof}
    The assumptions of \cref{thm:albrekht} imply that $(\bA - \bB\bR^{-1}\bB^\top \bV_2 )$ is Hurwitz, which implies that $\cL_{k}(\bA - \bB\bR^{-1}\bB^\top \bV_2 )$ is nonsingular.
    Hence the linear systems have unique solutions.
\end{proof}

\begin{remark}\label{prop:flops}
    The cost of computing degree~$d$ approximations to the value function with \cref{thm:wiPoly} is $O(dn^{d+1})$ using our implementation, whereas a naive approach costs $O(n^{3d})$.
    The flop count is tedious, so we omit it here and refer the interested reader to our other publication \cite{Corbin2024c} where similar details can be found.
    To summarize, the main algorithmic accelerations come from reshaping Kronecker products to leverage Basic Linear Algebra Subprograms (BLAS) operations and the use of a specialized linear solver \cite{Borggaard2021,Chen2019} that takes advantage of the \emph{$k$-way Lyapunov structure} of the linear systems \cref{eq:LinSysForWk-Poly}.
\end{remark}

% Placed here to get correct placement in PDF
\begin{figure*}[b]
    \centering
    % IEEE uses as a separator
    \hrulefill
    % The spacer can be tweaked to stop underfull vboxes.
    \vspace*{4pt}
    \includegraphics[width = \textwidth,page=1]{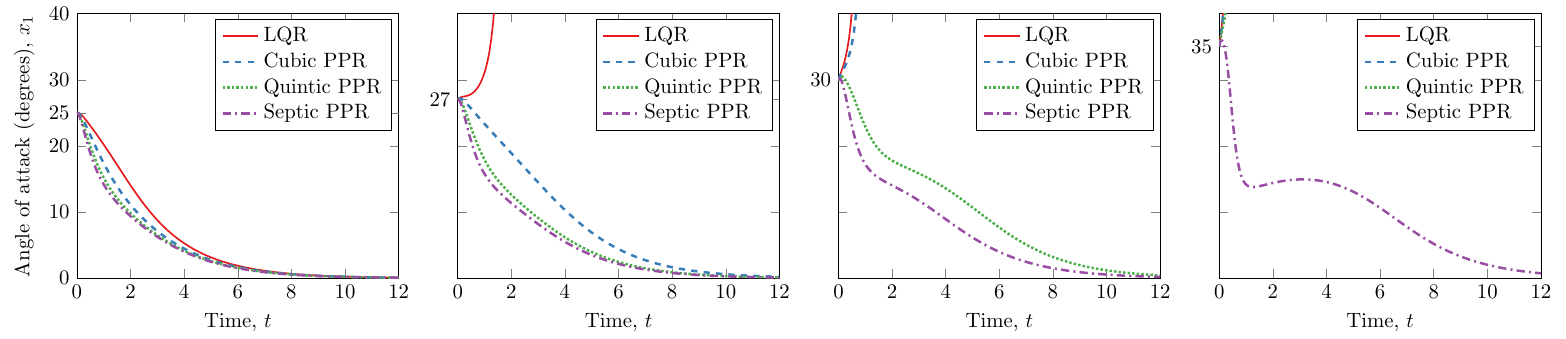}
    \caption{Angle of attack response for initial conditions (from left to right) $\alpha_0 = 25^{\circ}, 27^{\circ}, 30^{\circ}, 35^{\circ}$. The PPR controllers stabilize the aircraft faster than LQR, and they are able to recover the aircraft from stall for larger initial angles of attack.}
    \label{fig:example7}
\end{figure*}

\begin{remark}
    Given a degree $d$ \emph{approximation} to the value function computed using \cref{thm:wiPoly}, a degree $d-1$ approximation to the optimal control is given by \cref{eq:optimal-u}.
    This suboptimal polynomial feedback law has the form
    \begin{equation}\label{eq:poly-feedback}
        \bu(\bx) = \bK \bx + \cK^{[2]} \kronF{\bx}{2} + \dots + \cK^{[d-1]} \kronF{\bx}{d-1}.
    \end{equation}
    The linear coefficient $\bK$ is precisely the LQR gain matrix $\bK = - \bR^{-1}\bB \bV_2$.
    The higher-order gain matrices are obtained by collecting the terms of each degree from \cref{eq:optimal-u}, where the polynomial form of $\bg(\bx)$ is given in \cref{eq:FOM-Poly} and the gradient of the value function is given by \cref{eq:Past-value-deriv-compact}.
\end{remark}

\section{Numerical Examples}\label{sec:results}
We demonstrate the approach for computing value functions and the associated controllers outlined in \cref{thm:wiPoly} on two examples.
First, we consider an aircraft flight model with state dimension $n=3$ in \cref{sec:example1}.
Then, in \cref{sec:example2}, we demonstrate that the approach can be scaled to a semidiscretized model of the Allen-Cahn PDE with state dimension $n=129$.

\subsection{3D Example: Aircraft Stall Stabilization}\label{sec:example1}

We consider an aircraft model used in \cite{Garrard1977} to formulate a nonlinear stall-stabilization controller for an F-8 Crusader cruising at $30,000$ ft at $\text{Mach}=0.85$.
In that work, the authors use an Al'brekht-based approach to compute nonlinear regulators by hand because a scalable, automated approach had not been developed.
This tedious approach limited the authors to only consider cubic drift nonlinearities and no nonlinear input terms.
Furthermore, they only consider quadratic-in-state costs.
This problem was also considered in \cite{Almubarak2019}, but they also discard the nonlinear input terms.
In contrast, our work provides the general purpose software to automate the computation so practitioners can easily implement nonlinear controllers including these terms.

\subsubsection*{Model}
The derivation of the state-space model from Newton's second law can be found in \cite{Garrard1977}.
The result is a state-space model in terms of the angle of attack $x_1$, the angle of the plane relative to the trim pitch $x_2$, and the rotation rate of the plane $x_3$, with the control input $u$ corresponding to the angle of the tail elevator:
\begin{equation}\label{eq:garrard-model}
    \begin{split}
        \dot{x}_1 & = x_3 - x_1^2 x_3 - 0.088 x_1 x_3 - 0.877 x_1 + 0.47 x_1^2 \\
                  & \quad - 0.019 x_2^2 + 3.846 x_1^3 - 0.215 u + 0.28 u x_1^2 \\
        \dot{x}_2 & = x_3                                                      \\
        \dot{x}_3 & = -0.396 x_3 - 4.208 x_1 - 0.47 x_1^2 - 3.564 x_1^3        \\
                  & \quad - 20.967 u + 6.265 ux_1^2.
    \end{split}
\end{equation}
\subsubsection*{Control Problem}
We apply our algorithm to the same control problem presented in \cite{Garrard1977}:
the model is subjected to disturbances in the angle of attack, physically corresponding to a gust of wind that puts the aircraft into a stall condition, which is reported in \cite{Garrard1977} to occur at an angle of $\alpha = 23.5^{\circ}$.
Hence given an initial condition $\bx_0 = \begin{bmatrix} \alpha_0 & 0 & 0 \end{bmatrix}^\top$, the objective is to design a controller that causes the plane to recover from stall.
This is formulated as an optimal control problem where we seek a control $\bu$ that minimizes the cost function
\begin{align*}
    J(\bx_0, \bu) = \frac{1}{2} \int_0^\infty \left(\bx^\top \bQ \bx + \bu^\top \bR \bu\right) \,\rd t
\end{align*}
with $\bQ = \frac{1}{4} \bI$ and $\bR = \bI$ subject to the dynamics \cref{eq:garrard-model}.

\subsubsection*{Results}
We compute degree $2$, $4$, $6$, and $8$ approximations to the value function, giving degree $1$, $3$, $5$ and $7$ controllers of the form \cref{eq:poly-feedback}, denoted LQR, Cubic PPR, Quintic PPR, and Septic PPR, respectively.
\cref{fig:example7} shows the angle of attack response under the action of the different controllers for initial conditions of $\alpha_0 = 25^{\circ}, 27^{\circ}, 30^{\circ}, 35^{\circ}$.
For the smallest angle of attack, $\alpha_0 = 25^{\circ}$, all controllers---even the LQR---successfully stabilize the aircraft.
However, as shown in \cref{tab:valueFunApproxs-2}, the PPR controllers exhibit lower control costs, demonstrating the improved efficiency of polynomial feedback laws over linear feedback laws.
\begin{table}[htb]
    \centering
    \caption{Aircraft control costs ($\alpha_0 = 25^{\circ}$) computed up to $T = 12$. PPR controllers have a lower cost.}
    \begin{tabular}{ccc}
        \toprule
        Controller  & $ \frac{1}{2} \int_0^T \left(\bx^\top \bQ \bx + \bu^\top \bR \bu \right)\,\rd t$ \\
        \midrule
        LQR         & 0.053166                                                                         \\
        Cubic PPR   & 0.044503                                                                         \\
        Quintic PPR & 0.040593                                                                         \\
        Septic PPR  & 0.039393                                                                         \\
        \bottomrule
    \end{tabular}
    \label{tab:valueFunApproxs-2}
\end{table}

Regarding the ability to recover the aircraft from stall, we see a graceful degradation of the controllers.
As the angle of attack increases, we gradually see each controller fail while the higher-order controllers are still able to stabilize the aircraft.
The higher-order controllers achieve their improved performance by providing more rapid control inputs, so practitioners need to consider this when designing the controllers, i.e. picking the weights $\bQ(\bx)$ and $\bR$.

This simple example demonstrates two reasons to consider polynomial feedback laws: 1) the control costs can be lower, and 2) PPR controllers may work in cases where LQR fails. There are cases where LQR works while PPR fails though, so practitioners should always exercise caution.
\subsection{Allen-Cahn Equation}\label{sec:example2}
We consider a semidiscretization of the Allen-Cahn PDE
\begin{align*}
    \frac{\partial w(z,t)}{\partial t} & = \epsilon \frac{\partial^2 w(z,t)}{\partial z^2} + w(z,t) - w(z,t)^3
\end{align*}
for $t > 0$ and $z \in \Omega = [-1,1]$.
The system is subject to the boundary conditions $w(-1,t)=-1$ and $w(1,t)=1$,
and the initial condition $w(z,0) = 0.53 z + 0.47\sin(-1.5\pi z)$ is chosen based on Example 34 in \cite{Trefethen2000}.
This reaction-diffusion PDE models phase separation in a two-phase mixture.
The solution describes the evolution of interfaces between the two phases, with
the diffusion coefficient $\epsilon$ determining the interface thickness.
The system has the three constant equilibrium solutions $w_{\text{ss}}(z) = -1,0,1$.
The solutions $w_{\text{ss}}(z) = \pm 1$ are stable and represent a pure single phase,
whereas $w_{\text{ss}}(z) = 0$ is unstable and represents a homogeneous mixture of the two phases.
The PDE also has a family of nontrivial steady-state solutions given by
\begin{align}
    w_{\text{ss}}(z; \epsilon, z_0) = \tanh\left(\frac{z-z_0}{\sqrt{2 \epsilon}} \right), \label{eq:steady-state-soln}
\end{align}
where $z_0$ is the position of the interface.
This is the only solution that satisfies the boundary conditions, so our problem is expected to converge to this type of solution.

\subsubsection*{Finite-Dimensional Model}
To put the model in the form \cref{eq:FOM-Poly}, the PDE is spatially discretized using a Chebychev pseudospectral collocation method with $n=129$ nodes
\cite[Ex.~34]{Trefethen2000},
as depicted in \cref{fig:cheb-nodes}.
The model is also augmented with three control inputs, as shown in \cref{fig:cheb-nodes}.
The Allen-Cahn PDE has polynomial structure, so the only approximation comes from the finite element discretization.
\begin{figure}[htb]
    \centering
    \includegraphics[page=3,width=\columnwidth]{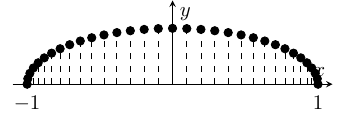}
    \caption{Spatial domain and control locations for the Allen-Cahn PDE discretized with 129 Chebychev nodes, which are denser near the boundaries.}
    \label{fig:cheb-nodes}
\end{figure}

The semidiscretized model takes the form
\begin{align}
    \dot\bx & = \bA \bx + \F_3 \kronF{\bx}{3} + \bB \bu. \label{eq:allen-cahn-1}
\end{align}
The state $\bx(t) = \begin{bmatrix} w(z_1,t),\dots,w(z_{n},t)\end{bmatrix}^\top \in \real^{n}$ represents the solution $w(z,t)$ evaluated at the Chebychev nodes $\bz = \begin{bmatrix} z_1, \dots, z_{n} \end{bmatrix}^\top$.
The linear component of the drift is given by $\bA = \epsilon \texttt{D}_\texttt{z}^2 + \bI$, where $\texttt{D}_\texttt{z}^2$ is the differentiation matrix approximating the operator $\partial^2 /\partial z^2$.
The cubic drift coefficient $\F_3$ is a sparse binary matrix with ones in the positions to satisfy
$\F_3 \kronF{\bx}{3} = \bx \odot \bx \odot \bx $, where $\odot$ is the Hadamard product (element-wise multiplication).
Three independent control inputs are placed at nodes $33$, $65$, and $97$, so the input matrix is $\bB = \begin{bmatrix}
        \be_{33} & \be_{65} & \be_{97}
    \end{bmatrix}$, where $\be_i \in \real^n$ is the $i$th standard basis vector.

\cref{fig:example9-openLoop} shows the open-loop behavior of the system for $\epsilon = 0.01$.
As described in \cite{Trefethen2000}, the system first exhibits a ``metastable'' configuration with three interfaces before suddenly transitioning to a steady-state solution of the form \cref{eq:steady-state-soln} with the interface at around $z_0 = 0$ at $t \approx 40$.
\begin{figure}[htb]
    \centering
    \includegraphics[width=0.9\columnwidth]{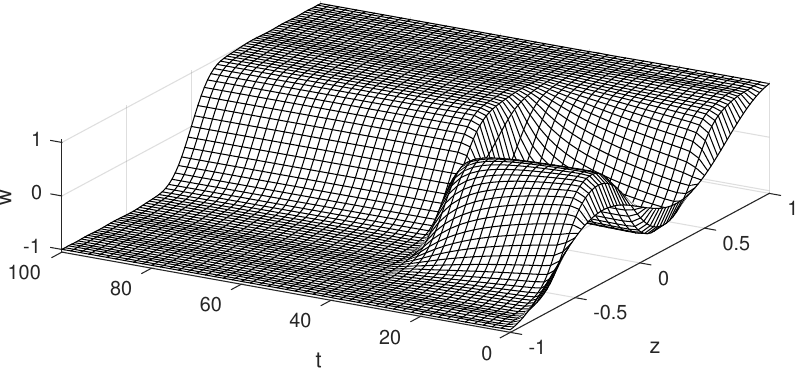}
    \caption{Open-loop behavior of the Allen-Cahn example for $\epsilon = 0.01$, as shown in \cite[Ex.~34]{Trefethen2000}.}
    \label{fig:example9-openLoop}
\end{figure}

\subsubsection*{Control Problem}
Our objective is to dictate the location $z_0$ of the interface between the two phases in the steady-state solution.
This is formulated as an optimal control problem seeking to asymptotically stabilize an equilibrium of the form \cref{eq:steady-state-soln} for a desired interface location $z_0$.
The equilibrium at the origin $\bx = \bzero$ of \cref{eq:allen-cahn-1} corresponds to the unstable trivial solution $w_{\text{ss}}(z) = 0$, which does not satisfy the boundary conditions,
so we shift the desired reference solution to the origin using a change of coordinates
$\bar\bx = \bx - \bx_{\text{ref}}$, where $\bx_{\text{ref}} = \begin{bmatrix} w_{\text{ss}}(z_1; \epsilon, z_0),\dots,w_{\text{ss}}(z_{n}; \epsilon, z_0)\end{bmatrix}^\top $ corresponds to a reference equilibrium solution defined in \cref{eq:steady-state-soln} for a particular interface location $z_0$.
The cost function we choose is
\begin{align*}
    J(\bar\bx_0, \bu) = \frac{1}{2} \int_0^\infty \left(\bar\bx^\top \bQ \bar\bx  + \bu^\top \bR \bu + \bq_4^\top \kronF{\bar\bx}{4}\right)\,\rd t
\end{align*}
with
$\bQ = 0.1 \bI$,
$\bR = \bI$,
and
$\bq_4$ the sparse vector satisfying $\bq_4^\top \kronF{\bar\bx}{4} =  \sum_{i=1}^n \bar{x}_i^4 $ (analgous to the identity matrix for $\bQ$).

\subsubsection*{Results}
Selecting the desired interface location as $z_0 = 0.5$, we compute a standard LQR controller, a Quadratic PPR controller, and a Cubic PPR controller to stabilize the system to the desired solution.
In addition to the diffusion parameter value $\epsilon = 0.01$ from \cite{Trefethen2000}, we also include results for the cases $\epsilon = 0.0075$ and $\epsilon = 0.005$.
The smaller the diffusion coefficient, the more the cubic nonlinearity dominates the dynamics; this has the effect of making the metastable configuration more persistent and making it more difficult to stabilize the desired equilibrium.

Closed-loop simulations of the nonlinear system \cref{eq:allen-cahn-1} are performed using
\textsc{Matlab}'s \texttt{ode23s()} up to $T=1000$.
We then evaluate the cost function to quantitatively assess each of the controllers;
the results are shown in \cref{tab:valueFunApproxs-example9}.

\begin{table}[htb]
    \centering
    \caption{Cost $\frac{1}{2} \int_0^T \left(\bar\bx^\top \bQ \bar\bx  + \bu^\top \bR \bu + \bq_4^\top \bar{\bx}^{\tiny\textcircled{4}}\right)\,\rd t$ integrated to $T = 1000$ for the Allen-Cahn example.}
    \begin{tabular}{cccc}
        \toprule
        Controller    & $\epsilon=0.01$ & $\epsilon=0.0075$ & $\epsilon=0.005$ \\
        \midrule
        LQR           & 5475.640        & 19376.855         & 87268.670        \\
        Quadratic PPR & 4339.483        & 14042.908         & 57876.913        \\
        Cubic PPR     & 1372.454        & 4153.668          & 20711.449        \\
        \bottomrule
    \end{tabular}
    \label{tab:valueFunApproxs-example9}
\end{table}

For the parameter values considered, the Cubic PPR controller has a cost about
75\% lower
than the LQR controller.
As a rule of thumb, controllers of even degree are discouraged, since the associated value function (which locally acts as a Lyapunov function for the closed-loop dynamics) would be of odd degree, which is ill-advised.
Nonetheless, we have included the results for a Quadratic PPR controller for comparison purposes; it also performs better than LQR for this initial condition and these parameter values, but the improvement is not as significant as the Cubic PPR controller.
Note that neither the Quadratic PPR nor the Cubic PPR controller solves the problem exactly; an infinite Taylor series would be required, and the solution is only guaranteed to converge locally.
Still, including higher-order polynomial terms from the dynamics and the cost function noticably improves the performance of the controllers.
In particular, the extra term in the cost function penalizes deviations from the reference configuration more heavily, so the PPR controllers are expected to stabilize the system more quickly and avoid the metastable configuration present in the open-loop solution.

\cref{fig:example9-linearControl} and \cref{fig:example9-cubicControl} show the closed-loop system under the action of the LQR controller and the Cubic PPR controller, respectively,
for the $\epsilon= 0.01$ case whose open-loop behavior is shown in \cref{fig:example9-openLoop}.
While both controllers stabilize the system to the desired solution, the Cubic PPR controller is able to perform the same task much more rapidly.

\begin{figure}[htb]
    \centering
    \begin{subfigure}[]{.9\columnwidth}
        \centering
        \includegraphics[width=\columnwidth]{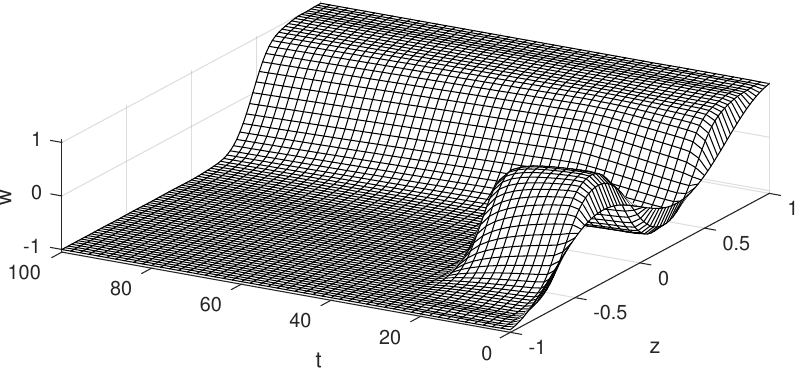}
        \caption{LQR}
        \label{fig:example9-linearControl}
    \end{subfigure}

    \begin{subfigure}[]{.9\columnwidth}
        \centering
        \includegraphics[width=\columnwidth]{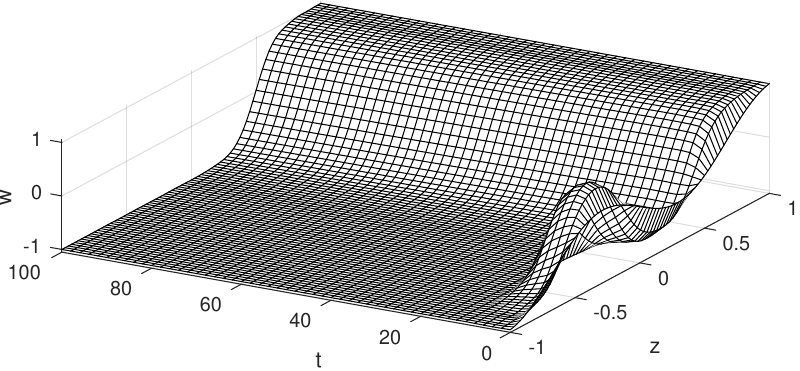}
        \caption{Cubic PPR}
        \label{fig:example9-cubicControl}
    \end{subfigure}
    \caption{Allen-Cahn example closed-loop simulations for $\epsilon = 0.01$}.
    \label{fig:example9}
\end{figure}

%%%%%%%%%%%%%%%%%%%%%%%%%%%%%%%%%%%%%%%%%%%%%%%%%%%%%%%%%%%%%%%%%%%%%%%%%%%%%%%%%%%%%%%%%%%%%%%%%
%%%%%%%%%%%%%%%%%%%%%%%%%%%%          Conclusion           %%%%%%%%%%%%%%%%%%%%%%%%%%%%%%%%%%%%%%
%%%%%%%%%%%%%%%%%%%%%%%%%%%%%%%%%%%%%%%%%%%%%%%%%%%%%%%%%%%%%%%%%%%%%%%%%%%%%%%%%%%%%%%%%%%%%%%%%
\section{Conclusions And Future Work}
In this paper, we provided a general scalable approach and open-access software for computing solutions to the polynomial-polynomial regulator problem.
The main contributions of this work include the development of algorithms capable of handling general polynomial state-dependence in drift, input map, and in the state penalty in the cost function.
These contributions enable more accurate local approximations of optimal control laws,
and the capability to include polynomial terms in the cost function gives practitioners more flexibility to tune controller behavior.

The results for the two examples considered herein demonstrate some of the potential benefits of polynomial controllers over linear controllers:
\begin{enumerate}
    \item PPR controllers may have lower control costs than LQR for initial conditions sufficiently close to the origin;
    \item PPR controllers may be able to stabilize initial conditions that LQR fails to stabilize.
\end{enumerate}
However, these controllers also experience the following limitations, since they are based on Taylor expansions:
\begin{enumerate}
    \item the solutions are only guaranteed to be valid within a neighborhood of the origin;
    \item the region of attraction in the closed-loop system does not necessarily increase as more terms are added to the controller.
\end{enumerate}
Practically speaking, there exist systems that are globally stabilized by LQR but only locally stabilized by PPR, so
practitioners seeking the advantages of polynomial controllers should be wary of their limitations as well.

As future work, we plan to investigate further the practical role of higher-order terms in the cost function.
While the $\bQ$ and $\bR$ matrices in LQR are well understood for trading controller performance for efficiency, the significance of higher-order terms such as $\bq_4$ in the cost function is not yet well understood, in particular regarding the global behavior of the controllers.
Additionally, the tensor structure of the higher-order value function and feedback gain coefficients is not fully understood.
They are known to often have low numerical rank which can be exploited by some numerical methods, but a major limitation that remains is the RAM requirements for storing the higher-order coefficients. We plan to investigate this topic and other potential approximations for accelerating Al'brekht's method.

%%%%%%%%%%%%%%%%%%%%%%%%%%%%%%%%%%%%%%%%%%%%%%%%%%%%%%%%%%%%%%%%%%%%%%%%%%%%%%%%%%%%%%%%%%%%%%%%%
%%%%%%%%%%%%%%%%%%%%%%%%%%            Backmatter           %%%%%%%%%%%%%%%%%%%%%%%%%%%%%%%%%%%%%%
%%%%%%%%%%%%%%%%%%%%%%%%%%%%%%%%%%%%%%%%%%%%%%%%%%%%%%%%%%%%%%%%%%%%%%%%%%%%%%%%%%%%%%%%%%%%%%%%%
\bibliography{references}
\bibliographystyle{IEEEtran}

\appendix
\subsection{Kronecker Product Identities}\label{sec:identities}
\begin{table}[H]
    \caption{Relevant Kronecker product identities. }
    \-\hspace{-2pt}\makebox[\columnwidth]{\rule{.95\columnwidth}{0.1em}}
    \vspace{-7pt}

    \-\hspace{-2pt}\makebox[\columnwidth]{\rule{.95\columnwidth}{0.1em}}

    \begin{identities}
        \setlength{\itemsep}{3pt}
        \setlength\itemindent{14pt}
        \item \label{ID:T2.4} $\quad (\bA \otimes \bB)(\bD \otimes \bG) = \bA \bD \otimes \bB \bG $
        \item \label{ID:T2.5} $\quad \bA \otimes \bB = \perm{s}{p} (\bB \otimes \bA)\perm{q}{t}$
        \item \label{ID:T2.17} $\quad (\bI_p \otimes \bx)\bA = \bA \otimes \bx$
        % \item \label{ID:vecKidentity} $\quad (\bx\otimes \bI_p)\bA= (\bx\otimes \bA)$
        \item \label{ID:T2.13} $\quad \Vec[\bA \bD \bB] = (\bB^\top \otimes \bA) \Vec[\bD]$
        \item \label{ID:T3.4} $\quad \begin{aligned}[t]
                \Vec[\bA \bD] & = (\bI_s \otimes \bA) \Vec[\bD] \\
                %   & = (\bD^\top \otimes \bI_p) \Vec[\bA] \\
                %   & = (\bD^\top \otimes \bA) \Vec[\bI_q] \\
            \end{aligned}$
        \item \label{ID:nonsymmetric-quadraticForm} $\quad \bu^\top \bB \bx  = \Vec[\bB]^\top (\bx \otimes \bu) $
        \item \label{ID:vecKron1} $\quad \Vec\left[\bx^\top \otimes \bI_m\right] = (\bx \otimes \Vec[\bI_m])$
        \item \label{ID:vecKron3} $\quad \Vec\left[\bA \otimes \bB\right] =  \left(\bI_q \otimes \perm{p}{t} \otimes \bI_s \right)\left(\Vec[\bA] \otimes \Vec[\bB] \right)$
    \end{identities}

    \vspace{5pt}

    \-\hspace{9pt}\textit{Dimensions of matrices used in the Kronecker product identities}\\

    \vspace{-2pt}
    \centering
    \begin{tabular}{ccc}
        $\bA(p \times q)$ & $\bD(q \times s)$ & $\bu(s \times 1)$ \\
        $\bB(s \times t)$ & $\bG(t \times u)$ & $\bx(t \times 1)$ \\
    \end{tabular}

    \vspace{4pt}

    \-\hspace{-2pt}\makebox[\columnwidth]{\rule{.95\columnwidth}{0.1em}}
    \label{tab:identities}
\end{table}

\subsection{Proof of \cref{thm:wiPoly}}\label{sec:thm-proof}
Inserting the polynomial forms of $\bf(\bx)$ and $\bg(\bx)$ from~\cref{eq:FOM-Poly}
into the HJB PDE~\cref{eq:HJB-PDE} gives
\begin{equation}\label{eq:HJB-PDE-plugged-in}
    \begin{split}
        0  = & \frac{\partial V^\top(\bx)}{\partial \bx} \left[\bA \bx + \sum_{p=2}^\ell \F_p \kronF{\bx}{p}\right]                                             \\
             & - \frac{1}{2} \frac{\partial V^\top(\bx)}{\partial \bx}\left[\sum_{p=1}^\ell \G_p \left(\kronF{\bx}{p} \otimes \bI_m \right) + \bB\right] \times \\
             & \bR^{-1}\left[\sum_{q=1}^\ell  \left({\kronF{\bx}{q}}^\top \otimes \bI_m \right)\G_q^\top + \bB^\top\right] \frac{\partial V(\bx)}{\partial \bx} \\
             & \qquad + \frac{1}{2} \bx^\top \bQ \bx + \frac{1}{2} \sum_{p=3}^{\lambda} \bq_p^\top \kronF{\bx}{p}
    \end{split}
\end{equation}
The gradient of the value function \cref{eq:vi-coeffs} in Kronecker product form is
\begin{equation}\label{eq:Past-value-deriv}
    \begin{split}
         & \frac{\partial^\top V(\bx)}{\partial \bx}
        = \frac{1}{2}\left (2\bx^\top \bV_2 \right.                                                                                                        \\
         & + \bv_3^\top (\bI_n \otimes \bx \otimes \bx) +  \bv_3^\top (\bx \otimes \bI_n \otimes \bx) + \bv_3^\top (\bx \otimes \bx \otimes \bI_n)         \\
         & + \bv_4^\top (\bI_n \otimes \bx \otimes \bx \otimes \bx) + \bv_4^\top ( \bx \otimes \bI_n \otimes \bx \otimes \bx)                              \\
         & + \left. \bv_4^\top ( \bx \otimes \bx \otimes \bI_n \otimes  \bx) + \bv_4^\top ( \bx \otimes  \bx \otimes \bx \otimes \bI_n )+ \cdots \right ),
    \end{split}
\end{equation}
where without loss of generality we assume that $\bV_2$ is symmetric.
After inserting the polynomial expressions for the dynamics~\cref{eq:FOM-Poly} and the value functions~\cref{eq:vi-coeffs}, the HJB PDE \cref{eq:HJB-PDE-plugged-in} yields an algebraic equation for each of the coefficients $\bv_i$ for $i=2,3,\dots,d$.
The collection of degree~2 terms in \cref{eq:HJB-PDE-plugged-in} is
\begin{equation*}
    \begin{split}
        0  = & \bx^\top \bV_2  \bA \bx   - \frac{1}{2} \bx^\top \bV_2  \bB  \bR^{-1} \bB^\top \bV_2 \bx + \frac{1}{2} \bx^\top \bQ \bx.
    \end{split}
\end{equation*}
Differentiating twice with respect to $\bx$ yields the algebraic Riccati equation \cref{eq:ARE} for $\bV_2$.

The equation for
$\bv_k$ for $k=3,4,\dots,d$
is obtained by collecting the terms of degree~$k$ in \cref{eq:HJB-PDE-plugged-in}; recall that as a consequence of \cref{thm:albrekht}, these are linear algebraic equations.
Thus we separate the collection of degree~$k$ terms in \cref{eq:HJB-PDE-plugged-in} as $0 =-\textit{LHS} + \textit{RHS}$, where $\textit{LHS}$ denotes the terms containing the unknown $\bv_k$
and $\textit{RHS}$ contains the sum of all of the remaining terms
\footnote{Eventually, $\textit{LHS}$ will be the left-hand-side of the linear algebraic equations for $\bv_k$, and $\textit{RHS}$ will be the right-hand-side, i.e. $\textit{LHS} = \textit{RHS}$.}.
The collection of degree~$k$ terms containing $\bv_k$ is
\begin{align*}
    \textit{LHS} & \coloneqq -\frac{1}{2} \bv_k^\top\left( (  \bI_n \otimes  \kronF{\bx}{k-1}) +  ( \bx \otimes \bI_n \otimes  \kronF{\bx}{k-2})+ \cdots \right) \times \\
                 & \qquad(\bA - \bB\bR^{-1}\bB^\top \bV_2 ) \bx.
\end{align*}
With careful algebraic manipulations, the Kronecker products can be reordered to rewrite this collection of terms using the \textit{$k$-way Lyapunov matrix}
\begin{align}
    \textit{LHS}
     & =  -\frac{1}{2} \bv_k^\top \cL_{k}(\bA - \bB\bR^{-1}\bB^\top \bV_2 ) \kronF{\bx}{k} .      \label{eq:k-way-trick-2}
\end{align}

The remaining terms in the HJB PDE, which make up the right-hand-side of the linear algebraic equation for $\bv_k$,
only contain coefficients $\bv_2$ through $\bv_{k-1}$, which have already been computed.
As such, they are symmetric according to \cref{def:sym}.
This allows us to rewrite the gradient of the value function~\cref{eq:Past-value-deriv}
using \cref{ID:T2.5,thm:symPermutation}~as
\begin{align}
    \frac{\partial^\top V(\bx)}{\partial \bx}
     & = \frac{1}{2}\left (2\bv_2^\top (\bI_n \otimes \bx)  + 3\bv_3^\top (\bI_n \otimes \bx \otimes \bx)
    + \cdots \right ) \nonumber                                                                                        \\
     & = \frac{1}{2}\sum_{i=2}^{k-1} i \bv_i^\top (\bI_n \otimes \kronF{\bx}{i-1}).\label{eq:Past-value-deriv-compact}
\end{align}
Rewriting the remaining terms in the HJB PDE \cref{eq:HJB-PDE-plugged-in} with the new gradient of the value function \cref{eq:Past-value-deriv-compact},
the $\textit{RHS}$ terms can be written as
\small
\begin{align*}
     & \textit{RHS}  \coloneqq  \frac{1}{2} \left[\sum_{i=2}^{k-1} i \bv_i^\top (\bI_n \otimes \kronF{\bx}{i-1})\right] \left[\sum_{p=2}^\ell \F_p \kronF{\bx}{p}\right] + \frac{1}{2} \sum_{p=3}^{\lambda} \bq_p^\top \kronF{\bx}{p}
\end{align*}
\vspace*{-1.5em}
\begin{subequations}\label{eq:HJB-PDE-plugged-in-symmetric}
    \begin{align}
         & \label{eq:HJB-PDE-plugged-in-symmetric-1}                                                                                                                                                                                            \\
         & - \frac{1}{8} \left[\sum_{i=2}^{k-1} i \bv_i^\top (\bI_n \otimes \kronF{\bx}{i-1})\right] \left[\sum_{p=1}^\ell \G_p \left(\kronF{\bx}{p} \otimes \bI_m \right) + \bB\right] \times  \label{eq:HJB-PDE-plugged-in-symmetric-2}       \\
         & \bR^{-1}\left[\sum_{q=1}^\ell  \left({\kronF{\bx}{q}}^\top \otimes \bI_m \right)\G_q^\top + \bB^\top\right] \left[\sum_{j=2}^{k-1} (\bI_n \otimes {\kronF{\bx}{j-1}}^\top) \bv_j j\right]. \label{eq:HJB-PDE-plugged-in-symmetric-3}
    \end{align}
\end{subequations}
\normalsize
Again using careful algebraic manipulations, these terms can be manipulated to a more desirable form; specifically, the goal is to put these in the form $\left(\cdot\right)\kronF{\bx}{k}$ so that we may match coefficients of the same polynomial degree.
Beginning with the terms from \cref{eq:HJB-PDE-plugged-in-symmetric-1} containing $\F_p$
\begin{align*}
     & \frac{1}{2}i \bv_i^\top (\bI_n \otimes \kronF{\bx}{i-1}) \F_p \kronF{\bx}{p}, &  & \text{with} &  & p + i - 1 = k,
\end{align*}
a similar trick to that used in \cref{eq:k-way-trick-2} allows us to rewrite this using the \textit{$k$-way Lyapunov matrix} as
\begin{align}\label{eq:N-terms}
    \frac{1}{2}  \bv_i^\top \cL_i(\F_p) \kronF{\bx}{k}.
\end{align}
The terms containing $\bq_p$ are already in the desired form
\begin{align}\label{eq:Q-terms}
     & \frac{1}{2}  \bq_p^\top \kronF{\bx}{p} &  & \text{with} &  & p = k.
\end{align}
The terms containing $\bB$ are
\begin{align}\label{eq:vi-B-terms-1}
     & \frac{1}{8} i \bv_i^\top (\bI_n \otimes \kronF{\bx}{i-1}) \bB\bR^{-1} \bB^\top (\bI_n \otimes {\kronF{\bx}{j-1}}^\top) \bv_j j,
\end{align}
with $i+j-2=k$.
After algebraic manipulations, these terms are written as
\begin{align}\label{eq:B-terms}
     & \frac{1}{8} ij \Vec[\bV_i^\top \bB\bR^{-1} \bB^\top \bV_j]^\top {\kronF{\bx}{k}}.
\end{align}

The remaining terms, which are those containing $\G_p$, are
\small
\begin{align}\label{eq:vi-G-terms-1}
     & \frac{1}{8} i \bv_i^\top (\bI_n \otimes \kronF{\bx}{i-1}) \G_p (\kronF{\bx}{p} \otimes \bI_m)  \times                             \\
     & \hspace{2.5cm} \bR^{-1} ({\kronF{\bx}{q}}^\top \otimes \bI_m)\G_q^\top (\bI_n \otimes {\kronF{\bx}{j-1}}^\top) \bv_j j, \nonumber
\end{align}
\normalsize
with $p \in \left[0, o\right]$,  $o \in \left[1,2\ell\right]$, $q = o - p$, and $i + j + o = k + 2$.
Here the manipulations are much more involved, but ultimately they just require careful application of the identities in \cref{tab:identities}.
After these manipulations, the terms containing $\G_p$ are written as
\small
\begin{align}
     & \frac{1}{8} i j \Vec \Biggl[ \left(\bI_{n^p} \otimes \Vec\left[ \bI_m\right]^\top\right) \left(\Vec\left[\G_q^\top \bV_j \right]^\top \otimes  \right.  \label{eq:G-terms}                                   \\
     & \quad \left. \vphantom{\Vec\left[\G_q^\top \bV_j \right]^\top} \left(\G_p^\top \bV_i \otimes \bR^{-1}\right) \right) \left(\bI_{n^{j-1}} \otimes \perm{n^{i-1}}{n^q m} \otimes \bI_m \right)\times \nonumber \\
     & \hspace{4cm}   \left( \bI_{n^{k-p}} \otimes \Vec\left[ \bI_m\right] \right)\Biggr]^\top {\kronF{\bx}{k}}. \nonumber
\end{align}
\normalsize

The quantities \cref{eq:N-terms,eq:Q-terms,eq:B-terms,eq:G-terms} represent single degree~$k$ terms containing $\F_p,\bq_p,\bB,$ and $\G_p$ contributions, respectively.
Introducing summations over the appropriate combinations of indices, the HJB PDE \cref{eq:HJB-PDE-plugged-in}, written now as $\textit{LHS} = \textit{RHS}$, can be expanded as
\small
\begin{align*}%\label{eq:kth-terms-sofar-2}
     & -\frac{1}{2} \bv_k^\top \cL_{k}(\bA - \bB\bR^{-1}\bB^\top \bV_2 ) \kronF{\bx}{k} =
    \frac{1}{2} \sum_{\mathclap{\substack{i,p\geq 2                                                                                                 \\       i + p = k+1}}} \bv_i^\top \cL_i(\F_p) \kronF{\bx}{k} \nonumber \\
     & \hspace{.4cm}+   \frac{1}{2}  \bq_k^\top \kronF{\bx}{k} - \frac{1}{8}\!\!\!\!\sum_{\substack{i,j>2                                           \\ i+j=k+2}} \!\!\!\!ij~{\Vec}(\bV_i^\top \bB \bR^{-1} \bB^\top \bV_j)\kronF{\bx}{k} \\
     & - \frac{1}{8} \sum_{o=1}^{2\ell}\left( \sum_{\substack{p,q \geq 0                                                                            \\p+q=o}} \left(  \sum_{\substack{i,j\geq 2 \\       i+j=k-o+2}} \!\!\!\!ij~\Vec \Biggl[ \left(\bI_{n^p} \otimes \Vec\left[ \bI_m\right]^\top\right) \times  \right.\right. \nonumber\\
     & \hspace{0.25cm} \left(\Vec\left[\G_q^\top \bV_j \right]^\top \otimes \left(\G_p^\top \bV_i \otimes \bR^{-1} \right) \right) \times \nonumber \\
     & \hspace{0.25cm} \left.\left.\left(\bI_{n^{j-1}} \otimes \perm{n^{i-1}}{n^q m} \otimes \bI_m \right)
        \left( \bI_{n^{k-p}} \otimes \Vec\left[ \bI_m\right] \right)\Biggr]^\top
    \vphantom{\sum_{\substack{p,q \geq 0                                                                                                            \\p+q=o}}} \right)\right){\kronF{\bx}{k}}
\end{align*}
\normalsize
We require this to hold for all $\bx$.
Pulling out the factor of ${\kronF{\bx}{k}}$ from every term, multiplying by negative two, and transposing the entire equation results in \cref{eq:LinSysForWk-Poly} and completes the proof for \cref{thm:wiPoly}. \hfill$\blacksquare$
\end{document}